\definecolor{grau}{rgb}{0.65,0.65,0.65}
\definecolor{dblau}{rgb}{0,0,0.45}
\definecolor{blau}{rgb}{0,0,0.75} 
\definecolor{grun}{rgb}{0.1,0.6,0.1} 
\newcommand{\myt}[1]{#1}
\theoremstyle{plain}
\newtheorem{lem}{\normalfont\scshape Lemma}
\newtheorem{thm}{\normalfont\scshape Theorem}
\newtheorem{coroll}{\normalfont\scshape Corollary}
\theoremstyle{definition}
\newtheorem{remark}{\normalfont\scshape Remark}
\newtheorem{example}{\normalfont\scshape Example}
\newtheorem{defi}{\normalfont\scshape Definition}
\def\vt{\vec{t}}
\def\vu{\vec{u}}
\def\Ztt{Z^{t}}
\def\ztt{\zeta^{t}}
\def\zeto{\zeta^{t_E,t_O}}
\def\Ztm{Z^{\vec{t}}}
\def\ztm{\zeta^{\vec{t}}}
\def\mstuff{\ensuremath{\stackrel{\vec{t}}\ast}}
\def\istuff{\ensuremath{\stackrel{t}\ast}}
\newcommand{\stuff}[1]{\stackrel{#1}{\ast}}
\newcommand{\Cont}[1]{\text{Con}_{#1}^{\vec{t}}}
\newcommand{\Con}[1]{\text{Con}_{#1}}
\newcommand{\hneu}{\ensuremath{\mathfrak{h}^2}}
\def\vsig{\vec{\sigma}}
\def\zt{\zeta}
\def\zts{\zeta^{\star}}
\def\R{\mathbf R}
\newcommand{\M}{\ensuremath{\mathcal{M}}}
\newcommand{\N}{\ensuremath{\mathbb{N}}}
\newcommand{\orcid}[1]{\href{https://orcid.org/#1}{\includegraphics[scale=0.35]{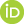}}}
\begin{document}
\title{On multi-interpolated multiple zeta values}%
\author[M.~Kuba]{Markus Kuba }
\address{Markus Kuba \orcid{0000-0001-7188-6601}\\
Department Applied Mathematics and Physics\\
FH - Technikum Wien\\
H\"ochst\"adtplatz 6\\
1200 Wien, Austria} %
\email{kuba@technikum-wien.at}
\date{\today}

\begin{abstract}
In this note we introduce multi-interpolated multiple zeta values. We provide a basic decomposition of these objects involving ordered partitions.
We also obtain identities for special instances of multi-interpolated multiple zeta values $\zeta^{\vec{t}}(\{s\}_k)$, generalizing earlier results. 
Moreover, we introduce a product for multi\--interpolated multiple zeta values. 
\end{abstract}

\keywords{Multi-interpolated multiple zeta values, interpolated multiple zeta values, multiple zeta values}%
\subjclass[2010]{05A15, 11M32} %

\maketitle


\section{Introduction}

The \myt{multiple zeta values}, henceforth MZVs, are defined by
\[
\zt(i_1,\dots,i_k)=\sum_{\ell_1>\cdots>\ell_k\ge 1}\frac1{\ell_1^{i_1}\cdots \ell_k^{i_k}},
\]
with admissible indices $(i_1,\dots,i_k)\in\N^k$ satisfying $i_1\ge 2$, $i_j\ge 1$ for $2\le j\le k$, see Hoffman~\cite{H92} and Zagier~\cite{Z}. 
We refer to $i_1 +\dots + i_k$ as the weight of this MZV, and $k$ as its depth. 
For a comprehensive overview as well as a great many pointers to the literature we refer to the survey of Zudilin~\cite{Zu}.
An important variant of the MZVs are the so-called multiple zeta \myt{star values}, abbreviated by MZSVs, where equality is allowed:
\[
\begin{split}
  \zts(i_1,\dots,i_k) & = \sum_{\ell_1\ge \cdots\ge \ell_k\ge 1}\frac1{\ell_1^{i_1}\cdots \ell_k^{i_k}}.
\end{split}
\]
Yamamoto~\cite{Y} introduced a generalization $\ztt$ called \myt{interpolated multiple zeta values} of both $\zt$ and $\zts$. Noting that, 
\[
\zts(i_1,\dots,i_k)=\sum_{\circ = \text{``},\text{''} \text{or} \, \text{``}+\text{''}}\zt(i_1\circ i_2 \dots \circ i_k),
\]
let the parameter $\sigma$ denote the number of plus in the expression $i_1\circ i_2 \dots \circ i_k$. Interpolated multiple zeta values are defined by
\begin{equation}
\label{EqYamamoto}
  \ztt(i_1,\dots,i_k) = \sum_{\circ = \text{``},\text{''} \text{or} \, \text{``}+\text{''}}t^{\sigma}\zt(i_1\circ i_2 \dots \circ i_k).
\end{equation}
Thus, the series $\ztt(i_1,\dots,i_k)$ is a polynomial in $t$ and interpolates between MZVs, $\zeta^0=\zeta$, and MZSVs, $\zeta^1=\zts$.
Equivalently, the interpolated MZVs can be defined as
\begin{equation}
\label{EqYamamoto2}
  \ztt(i_1,\dots,i_k) =  \sum_{\ell_1\ge \cdots\ge \ell_k\ge 1}\frac{t^{\sigma(\ell_1,\dots,\ell_k)}}{\ell_1^{i_1}\cdots \ell_k^{i_k}},
\end{equation}
where $\sigma$ is given by the number of equalities:
\begin{equation}
\label{DefSigma}
\sigma(\ell_1,\dots,\ell_k)=|\{1\le r\le k-1\mid  \ell_r=\ell_{r+1}\}|.
\end{equation}
It turned out that the interpolated series satisfies 
many identities generalizing or unifying earlier result for multiple zeta and zeta star values, see for example Yamamoto~\cite{Y} for a generalization of the sum identity as well as many other results, and also Hoffman and Ihara~\cite{HI} or Hoffman~\cite{H2000,H2018} for further results. We also refer to Tanaka and Wakabayashi~\cite{TaWa} for a proof of Kawashima’s relations, 
to Wakabayashi~\cite{Waka} as well as Li and Qin~\cite{LiQin1} for Double shuffle and Hoffman’s relations, 
and to Li~\cite{Li} for a recent study of algebraic aspects, including extended double shuffle relations, symmetric sum formulas and restricted sum formulas. 

\smallskip

Most important, the so-called $t$-harmonic product $\istuff$ was introduced by Yamamoto~\cite{Y} for the interpolated MZVs, see also~\cite{H2000,H2018,HI}. It satisfies
\begin{equation}
\label{eqn:YamamotoIdentity}
\ztt\big( (i_1,\dots,i_k)\istuff (j_1,\dots,j_\ell)\big)=\ztt(i_1,\dots,i_k)\cdot \ztt(j_1,\dots,j_\ell),
\end{equation}
where the product $\istuff$ is actually defined in an algebraic way (see Section~\ref{SecAlg}). Hoffman and Ihara used a general algebra framework, leading, amongst others to expressions for interpolated MZVs $\ztt(\{s\}_{k})$ in terms of Bell polynomials
 and ordinary single argument zeta values, $m\ge 1$. Here and throughout this work $\{s\}_{k}$ means $s$ repeated $k$ times.


\smallskip

In this work we introduce a generalization $\vsig$ of the parameter $\sigma$~\eqref{DefSigma}, generalizing interpolated MZVs~\eqref{EqYamamoto2}
to what we call multi-interpolated MZVs. This allows to gain more insight into structural decompositions of ordinary interpolated MZVs, as well as a link to the $\mathfrak{t}$-values of Hoffman~\cite{H2019}.
Our results are the following. First, we obtain in Theorem~\ref{the:decomposition} a decomposition of multi\--interpolated multiple zeta values using ordered partitions.
Second, we obtain several identities for multi-interpolated multiple zeta values, $\zt^{\vec{t}}(\{s\}_k)$, see Theorem~\ref{TheGF} and Corollary~\ref{Co1}, generalizing earlier results for $\ztt(\{s\}_{k})$.
Third, as our main result we introduce in Section~\ref{SecAlg} a product $\mstuff$ for multi-interpolated multiple zeta values, generalizing~\eqref{eqn:YamamotoIdentity}.
Interestingly, it turns out that the product $\mstuff$ involves a non-commutative variable $\vec{t}$, 
in contrast to the earlier $t$-harmonic product $\istuff$ for interpolated MZVs.

\section{Multi-interpolated multiple zeta values and related multiple zeta values}
\begin{defi}[Multi-interpolated multiple zeta values]
\label{DefMuli}
Given integers $(i_1,\dots,i_k)$ with $i_1\ge 2$, $k\ge 1$ and a sequence of variables $\vec{t}=(t_1,t_2,\dots)$. 
For our purpose we assume that $t_j\in[-1;1]$, $j\in\N$. 
The multi-interpolated multiple zeta value $ \ztm(i_1,\dots,i_k)$ is defined by
\begin{equation}
\label{def:ztm}
\ztm(i_1,\dots,i_k)=\sum_{\ell_1\ge \cdots\ge \ell_k\ge 1}\frac{\vec{t}^{\vec{\sigma}(\vec{\ell})}}{\ell_1^{i_1}\dots \ell_k^{i_k}}
= \sum_{\ell_1\ge \cdots\ge \ell_k\ge 1}\frac{\prod_{j=1}^{\infty}
t_{j}^{\sigma_j(\ell_1,\dots,\ell_k)}}{\ell_1^{i_1}\dots \ell_k^{i_k}},
\end{equation}
with parameter $\sigma_j$ denoting the number of equalities of the integer $j\in\N$:
\begin{equation}
\label{def:sigmaj}
\sigma_j(\ell_1,\dots,\ell_k)=|\{1\le r\le k-1\mid  \ell_r=\ell_{r+1}=j\}|.
\end{equation}
\end{defi}

\begin{remark}
\label{rem:sigma}
Note that the parameters $\sigma_j$ refine the parameter $\sigma$~\eqref{DefSigma} due to the identity
\[
\sigma = \sum_{j\ge 1}\sigma_j.
\] 
Hence, by setting $t_j=t$, $j\ge 1$, which we write in a slight abuse of notation 
simply as $\vec{t}=t$, we have 
\begin{equation}
\label{eqn:tgleicht}
\prod_{j=1}^{\infty}t_{j}^{\sigma_j(\vec{\ell})}=t^{\sum_{j=1}^{\infty} \sigma_j(\vec{\ell})}=t^{\sigma(\vec{\ell})},
\end{equation}
so that $\ztm(i_1,\dots,i_k)$ reduces to $\ztt(i_1,\dots,i_k)$. 
\end{remark}
\begin{example}[Multi-interpolated MZVs: depth one]
\label{ExDeco1}
For depth one, the multi-interpolated MZVs reduce to ordinary zeta values:
\[
\ztm(i)=\ztt(i)=\zt(i),\quad i>1.
\]
\end{example}

\begin{example}[Multi-interpolated MZVs: depth two]
\label{ExDeco2}
The set $\{\ell_1\ge\ell_2\ge1 \}$ is split into two parts,
\[
\{\ell_1\ge\ell_2\ge1 \}=\{\ell_1>\ell_2\ge1 \}\cup\{\ell_1=\ell_2\ge1 \},
\]
leading to
\begin{align}
\label{ExDeco2eqn1}
\ztm(i_1,i_2)&=\sum_{\ell_1\ge  \ell_2\ge 1}\frac{\vec{t}^{\vec{\sigma}(\vec{\ell})}}{\ell_1^{i_1}\ell_2^{i_2}}
=\sum_{\ell_1> \ell_2\ge 1}\frac{1}{\ell_1^{i_1}\ell_2^{i_2}}
+ \sum_{\ell\ge 1}\frac{t_{\ell}}{\ell^{i_1+i_2}}=\zt(i_1,i_2)+\sum_{\ell\ge 1}\frac{t_{\ell}}{\ell^{i_1+i_2}}.
\end{align}
Here we used $\vec{t}^{\vec{\sigma}(\vec{\ell})}=1$ for $\vec{\ell}\in\{\ell_1>\ell_2\ge1 \}$ and $\vec{t}^{\vec{\sigma}(\vec{\ell})}=t_{\ell}$ for $\vec{\ell}\in\{\ell=\ell_1=\ell_2\ge1 \}$.
 In the special case $\vec{t}=t$~\eqref{eqn:tgleicht} we reobtain the ordinary interpolated MZVs and the evaluation
\[
\ztt(i_1,i_2)=\zt(i_1,i_2)+t\cdot \zt(i_1+i_2).
\]
\end{example}

In our previous example, different basic objects like $\sum_{\ell\ge 1}\frac{t_{\ell}}{\ell^{i_1+i_2}}$ appeared, compared to the ordinary interpolated MZVs. Thus, in order to analyze multi-interpolated MZVs, we need
another generalization, which includes interpolated MZVs (thus, also ordinary MZVs and MZSVs), as well as the atomic parts of the multi-interpolated truncated MZVs. 
\begin{defi}
\label{DefNewZeta1}
For $k\ge 1$ let $j_1,\dots,j_k\ge 0$ denote integers. We introduce multiple zeta values with variables $\vec{t}=(t_1,t_2,\dots)$
and admissible indices $(i_1,\dots,i_k)$ satisfying $i_1\ge 2$, $i_j\ge 1$ for $2\le j\le k$, 
\begin{equation*}
\zt\big((\vt^{j_1},i_1),\dots,(\vt^{j_k},i_k)\big)=\sum_{\ell_1>\cdots>\ell_k\ge 1}\frac{t_{\ell_1}^{j_1}\cdots t_{\ell_k}^{j_k}}{\ell_1^{i_1}\cdots \ell_k^{i_k}}.
\end{equation*}
\end{defi}

\begin{remark}[Connection to multiple $\mathfrak{t}$-values]
The definition above also covers the multiple $\mathfrak{t}$-values of Hoffman~\cite{H2019}.
The $\mathfrak{t}$-values~\cite{N} and multiple $\mathfrak{t}$-values~\cite{H2019} are defined by
\begin{equation}
\label{def:multipleT}
\mathfrak{t}(i_1,\dots,i_k)=\sum_{\substack{\ell_1>\cdots>\ell_k\ge 1\\\ell_i \text{odd}}}\frac1{\ell_1^{i_1}\cdots \ell_k^{i_k}}.
\end{equation}
We note in passing that $\mathfrak{t}(i)=(1-2^{-i})\zt(i)$.
Setting $t_\ell=(1-(-1)^\ell)/2$, $\ell\ge 1$, and $j_1=\dots=j_k=1$ leads to
\[
\zt\big((\vt,i_1),\dots,(\vt,i_k)\big)=\mathfrak{t}(i_1,\dots,i_k).
\]
\end{remark}

Note that for $j_\ell=0$, $1\le \ell\le k$, or $t_\ell=1$, $\ell\ge 1$, we simply write $i_\ell$ instead of $(\vec{1},i_\ell)$.
The MZSVs with variables $\vec{t}=(t_1,t_2,\dots)$ and also the multi-interpolated generalizations
$\ztm\big((\vt^{j_1},i_1),\dots,(\vt^{j_k},i_k)\big)$ are defined accordingly. 
For $t_{\ell_1}=-1$, $\ell_1\ge k$, the value $j_1=1$ is also admissible; this leads to alternating MZVs.
The generalized MZVs in Definition~\ref{DefNewZeta1} can be generalized further by setting $\vu_m=(u_{m,1},u_{m,2},\dots)$ and we get
\begin{equation}
\label{DefNewZeta3}
\zt\big( (\vu_1,i_1),\dots,(\vu_k,i_k)\big)=\sum_{\ell_1>\cdots>\ell_k\ge 1}\frac{u_{1,\ell_1}\cdots u_{k,\ell_k}}{\ell_1^{i_1}\cdots \ell_k^{i_k}},
\end{equation}
such that for $\vu_m=\vt^{j_m}$, $1\le m\le k$ we reobtain our earlier definition, but for and $\vu_m=(x_m^n)_{n\ge 1}$ we obtain multiple polylogarithms. Moreover, mixture models of multiple $\mathfrak{t}$-values and zeta values can be obtained by suitable choices of $\vu_m$.

\begin{example}[Multi-interpolated MZVs: depth three]
\label{ExDeco3}
We decompose $\ztm(i_1,i_2,i_3)$ into summands by splitting the underlying set into four parts:
\begin{align*}
\{\ell_1\ge\ell_2\ge \ell_3\ge 1\}&=\{\ell_1>\ell_2> \ell_3\ge 1\}\cup
\{\ell_1>\ell_2= \ell_3\ge 1\}\\
&\quad\cup \{\ell_1=\ell_2> \ell_3\ge 1\}\cup \{\ell_1=\ell_2= \ell_3\ge 1\},
\end{align*}
such that
\begin{align*}
\ztm(i_1,i_2,i_3)&=\sum_{\ell_1\ge\ell_2\ge \ell_3\ge 1}\frac{\vec{t}^{\vec{\sigma}(\vec{\ell})}}{\ell_1^{i_1}\ell_2^{i_2}\ell_3^{i_3}}\\
&=\zt(i_1,i_2,i_3)+\zt(i_1,(\vt,i_2+i_3))+\zt((\vt,i_1+i_2),i_3)+\zt((\vt^2,i_1+i_2+i_3)).
\end{align*}
\end{example}

A natural specialization of $\ztm(i_1,\dots,i_k)$ are even-odd interpolations. 

\begin{example}[Even-odd interpolated multiple zeta values]
\label{EvenOdd}
Given the multi-interpolated MZV $\ztm(i_1,\dots,i_k)$, we choose $t_{2m}=t_E$ and $t_{2m-1}=t_{O}$, $m\ge 1$, obtaining the even-odd interpolation 
\[
\zeto(i_1,\dots,i_k)=\sum_{\ell_1\ge \cdots\ge \ell_k\ge 1}\frac{t_E^{\sigma_E(\ell_1,\dots,\ell_k)}\cdot t_O^{\sigma_O(\ell_1,\dots,\ell_k)}}{\ell_1^{i_1}\cdots \ell_k^{i_k}},
\]
where $\sigma_E$ and $\sigma_O$ are given by the number of even and odd equalities~\eqref{def:sigmaj}, respectively.
Note that here, variants of the multiple $\mathfrak{t}$-values of Hoffman~\cite{H2019}, see~\eqref{def:multipleT},
naturally appear, as well as mixtures of multiple zeta and $\mathfrak{t}$-values (or multiple Hurwitz-zeta values); for example, 
in the case of depth two we obtain from Example~\ref{ExDeco2},~\eqref{ExDeco2eqn1} the decomposition
\begin{align*}
\ztm(i_1,i_2)
&=\zt(i_1,i_2)+t_E\cdot \frac{1}{2^{i_1+i_2}}\zt(i_1+i_2) + t_O\cdot \mathfrak{t}(i_1+i_2).
\end{align*}
\end{example}

\subsection{Ordered partitions and multi-interpolated MZVs}

Motivated by the special cases of depth one, two and three in Examples~\ref{ExDeco1},~\ref{ExDeco2} and~\ref{ExDeco3}, and also the very definition of interpolated MZVs~\eqref{EqYamamoto}, we provide a representation of multi-interpolated MZVs in terms of the values in Definition~\ref{DefNewZeta1}.
We start from the representation of interpolated multiple zeta values by ordered partitions $\mathcal{P}(k)$ of the integer $k$, also called compositions. We associate to each ordered partition $\mathbf{p}=(p_1,\dots,p_r)\in\mathcal{P}(k)$ a map from $\N^k$ to $\N^{\mathcal{L}(\mathbf{p})}$, where $\mathcal{L}(.)$ denote the length of the ordered partition:
\[
\mathbf{p}(i_1,\dots,i_k)=(\sum_{j_1=1}^{P_1}i_{j_1},\sum_{j_1=P_1+1}^{P_2}i_{j_1} \dots, \sum_{j_r=P_{r-1}+1}^{P_r}i_{j_r}).
\]
Here we use the convention $P_j=\sum_{\ell=1}^{j}p_\ell$. Then, 
\[
\ztt(i_1,\dots,i_k)=\sum_{\mathbf{p}\in\mathcal{P}(k)}t^{k-\mathcal{L}(\mathbf{p})}\zt(\mathbf{p}(i_1,\dots,i_k)),
\]
as each $\mathbf{p}=(p_1,\dots,p_r)\in\mathcal{P}(k)$ corresponds to a unique partition of the set 
\[
M=\{\ell_1\ge \dots\ge \ell_k\ge1\}=\bigcup_{\mathbf{p}\in\mathcal{P}(k)}\mathbf{p}(M)
\]
into subsets
\[
\mathbf{p}(M)=\{\ell_1\ge \dots\ge \ell_k\ge 1\colon \ell_1=\dots=\ell_{P_1}>\dots> \ell_{P_{r-1}+1}=\dots=\ell_{P_r}\}.
\]
Consequently, we obtain the following representation.
\begin{thm}
\label{the:decomposition}
The multi-interpolated multiple zeta values $\ztm(i_1,\dots,i_k)$, with $k\in\N$ and arguments $i_1>1$, $i_2,\dots,i_k\in\N$,
can be expressed in terms of ordered partitions $\mathbf{p}\in\mathcal{P}(k)$:
\begin{align*}
\ztm(i_1,\dots,i_k)=\sum_{\mathbf{p}\in\mathcal{P}(k)}\zt\Big((\vec{t}^{p_1-1},\sum_{j_1=1}^{P_1}i_{j_1}) \dots, (\vec{t}^{p_r-1},\sum_{j_r=P_{r-1}+1}^{P_r}i_{j_r})\Big).
\end{align*}
\end{thm}
\begin{proof}
\begin{align*}
\ztm(i_1,\dots,i_k)&=\sum_{\ell_1\ge \cdots\ge \ell_k\ge 1}\frac{\vec{t}^{\vec{\sigma}(\vec{\ell})}}{\ell_1^{i_1}\dots \ell_k^{i_k}}
=\sum_{(\ell_1,\dots,\ell_k)\in M}\frac{\vec{t}^{\vec{\sigma}(\vec{\ell})}}{\ell_1^{i_1}\dots \ell_k^{i_k}}\\
&=\sum_{\mathbf{p}\in\mathcal{P}(k)}\sum_{(\ell_1,\dots,\ell_k)\in \mathbf{p}(M)}\frac{\vec{t}^{\vec{\sigma}(\vec{\ell})}}{\ell_1^{i_1}\dots \ell_k^{i_k}}\\
&=\sum_{\mathbf{p}\in\mathcal{P}(k)}\zt\Big((\vec{t}^{p_1-1},\sum_{j_1=1}^{P_1}i_{j_1}), \dots, (\vec{t}^{p_r-1},\sum_{j_r=P_{r-1}+1}^{P_r}i_{j_r})\Big).
\end{align*}
\end{proof}

\section{Multi-interpolated MZVs with repeated arguments}
In the following we provide results for $\ztm(\{s\}_{k})$, generalizing earlier results for $\zt(\{s\}_{k})$~\cite{Bor}, as well as $\ztt(\{s\}_{k})$~\cite{HI}. 
Our proofs are based on generating functions and symbolic combinatorial constructions.

\begin{thm}
\label{TheGF}
The generating function $\Theta(z,\vec{t})=\sum_{k\ge 0}\ztm(\{s\}_{k})z^k$
of the multi-interpolated MZVs $\ztm(\{s\}_{k})$ is given by
\begin{equation*}
\Theta(z,\vec{t})=\prod_{m=1}^\infty\Big(1+\frac{\frac{1}{m^s}\cdot z}{1-\frac{1}{m^s} z t_m}\Big)
=\exp\Big(\sum_{j=1}^{\infty}\frac{z^j}{j}\cdot \big(\zt(\vec{t}^j, js)-\zt((\vec{t}-\vec{1})^j,js)\big)\Big).
\end{equation*}

\end{thm}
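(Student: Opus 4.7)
The plan is to exploit the fact that a non-increasing tuple $\ell_1\ge\cdots\ge\ell_k\ge 1$ is uniquely encoded by its multiplicity sequence $(k_m)_{m\ge 1}$, where $k_m$ counts the indices $j$ with $\ell_j=m$. One has $\sum_m k_m=k$, the sequence has finite support, and the ordering of the $\ell_j$'s is then forced. For each $m$ with $k_m\ge 1$, the refined counter $\sigma_m$ from Definition~\ref{DefMuli} equals $k_m-1$, so the contribution of the value $m$ to the weight $\vec t^{\,\vec\sigma(\vec\ell)}/(\ell_1^s\cdots\ell_k^s)$ is $t_m^{k_m-1}/m^{sk_m}$, with the convention that it is simply $1$ when $k_m=0$. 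After multiplying by $z^k$ and summing over $k\ge 0$, the resulting double sum factorises over $m$, giving
\[
\Theta(z,\vec t)=\prod_{m\ge 1}\Bigl(1+\sum_{a\ge 1}\frac{z^{a}\,t_m^{\,a-1}}{m^{sa}}\Bigr).
\]
The inner geometric series evaluates to $\tfrac{z/m^s}{1-zt_m/m^s}$, which yields the first equality of the theorem.

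For the exponential form, I would combine the two summands of each factor into a single fraction,
\[
1+\frac{z/m^s}{1-zt_m/m^s}=\frac{1+z(1-t_m)/m^s}{1-zt_m/m^s},
\]
take logarithms, and apply the Mercator series
\[
-\log\!\Bigl(1-\tfrac{zt_m}{m^s}\Bigr)=\sum_{j\ge 1}\frac{z^j t_m^{\,j}}{jm^{js}},\qquad\log\!\Bigl(1+\tfrac{z(1-t_m)}{m^s}\Bigr)=-\sum_{j\ge 1}\frac{z^j(t_m-1)^{j}}{jm^{js}},
\]
using the identity $(-1)^j(1-t_m)^j=(t_m-1)^j$. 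Summing over $m$ and identifying the depth-one instances of Definition~\ref{DefNewZeta1}, namely $\zt(js\vec t^{\,j})=\sum_{m\ge 1}t_m^{\,j}/m^{js}$ and $\zt(js(\vec t-\vec 1)^j)=\sum_{m\ge 1}(t_m-1)^{j}/m^{js}$, collapses $\log\Theta(z,\vec t)$ to exactly $\sum_{j\ge 1}\frac{z^j}{j}\bigl(\zt(js\vec t^{\,j})-\zt(js(\vec t-\vec 1)^j)\bigr)$, as claimed.

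The only point needing care is the legitimacy of these manipulations. Since $s\ge 2$ and every $t_m\in[-1,1]$, each factor has the form $1+O(z/m^s)$, so the infinite product converges as a formal power series in $z$ and indeed absolutely on a common disc around $0$. Consequently the double sum produced by the multiplicity decomposition is absolutely convergent there, and the logarithmic expansions applied termwise are as well; this absolute convergence also justifies the interchange of the sums over $m$ and $j$ in the last step. I expect this convergence bookkeeping to be the only genuine obstacle; the combinatorial content of the argument is the bijection between tuples and multiplicity sequences.
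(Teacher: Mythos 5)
Your proof is correct and follows essentially the same route as the paper: the multiplicity-sequence decomposition is precisely the paper's symbolic $\mathrm{SEQ}(\mathcal{Z}_m)$ construction with weight $t_m^{j-1}/m^{sj}$, yielding the same factor $1+\frac{z/m^s}{1-zt_m/m^s}$, and the exp--log expansion identifying $\zt(js\vec t^{\,j})$ and $\zt(js(\vec t-\vec 1)^j)$ is identical. The convergence remarks you add are a reasonable supplement but do not change the argument.
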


From the generating function we deduce several representations. 
We collect the definition of the complete Bell polynomials $B_n(x_1,\dots,x_n)$, determined by the identity
\[
\exp\Big(\sum_{\ell\ge 1}\frac{z^{\ell}}{\ell!}x_\ell\Big)
= \sum_{j\ge 0}\frac{B_j(x_1,\dots,x_j)}{j!}z^j,
\]
such that
\[
B_k(x_1,\dots,x_k)=\sum_{m_1+2m_2+\dots+k m_k=k}\frac{k!}{m_1!m_2!\cdot m_n!}\left(\frac{x_1}{1!}\right)^{m_1}\dots\left(\frac{x_k}{k!}\right)^{m_k}.
\]

\begin{coroll}
\label{Co1}
The values $\ztm(\{s\}_{k})$ satisfy
\[
\ztm(\{s\}_{k})=\sum_{\ell=0}^{k}\zts(\{(\vec{t},s)\}_{\ell})\cdot \zt\big(\{(\vec{1}-\vec{t},s)\}_{k-\ell}\big),
\]
where $\zt(\{(\vec{t},s)\}_{0})=\zts(\{(\vec{t},s)\}_{0})=1$. 

\smallskip

Moreover, 
\begin{equation*}
\ztm(\{s\}_{k})=\frac1{k!} B_k(x_1,\dots,x_k),
\end{equation*}
with $x_j=(j-1)!\Big(\zt((\vec{t}^j,j s))-\zt\big(((\vec{t}-\vec{1})^j,js)\big)\Big)$, $1\le j\le k$.
\end{coroll}

\begin{remark}
Both expressions are also true for the truncated variants. The Bell polynomial expression 
also leads to a third representation of $\ztm(\{s\}_{k})$ in terms of a determinant. A determinantal expression for $B_k(x_1,\dots,x_k)$ is given in~\cite{Collins2001} based on~\cite{Comtet,Ivanov}; another expression is obtained by using modified Bell polynomials 
$Q_k(x_1,\dots,x_k)$, given by
\begin{align*}
Q_k(x_1,\dots,x_k)&=\frac1{k!}B_k(0!x_1,1!x_2,\dots,(k-1)!x_k)\\
&=\sum_{m_1+2m_2+\dots+k m_k=k}\frac{1}{m_1!m_2!\cdot m_n!}\left(\frac{x_1}{1}\right)^{m_1}\dots\left(\frac{x_k}{k}\right)^{m_k}
\end{align*}
by MacDonald~\cite{MacDo} (see Hoffman~\cite{H2017} for additional properties): with
\[
Q_k(x_1,\dots,x_k)=\frac{1}{k!}\cdot
\left(
\begin{matrix}
x_1 & -1 & 0 &\dots &0\\
x_2 & x_1 & -2 &\dots &0\\
\vdots & \vdots & \vdots &\ddots &\vdots\\
x_{k-1} & x_{k-2} & x_{k-3} &\dots &-(k-1)\\
x_{k} & x_{k-1} & x_{k-2} &\dots &x_1\\
\end{matrix}
\right).
\]
\end{remark}

\begin{proof}[Proof of Theorem~\ref{TheGF}]
We use the symbolic constructions~\cite{FS2009}: let $\mathcal{Z}_m=\{m\}$ be a combinatorial class of size one, $1\le m\le n$. Due to the sequence construction we can describe the class of multisets $\mathcal{B}_m$ of $\mathcal{Z}_m$ as follows
\[
\mathcal{B}_m=\text{SEQ}(\mathcal{Z}_m)=\{\epsilon\} + \mathcal{Z}_m+ \mathcal{Z}_m\times \mathcal{Z}_m+\mathcal{Z}_m\times \mathcal{Z}_m\times \mathcal{Z}_m+\dots;
\]
Thus, the generating function 
\[
B_m(z)=\sum_{\beta\in\mathcal{B}_m}w(\beta)t^{\sigma(\beta)}z^{|\beta|}= 1+\sum_{\epsilon\neq \beta\in\mathcal{B}_m}w(\beta)t^{|\beta|-1}z^{|\beta|}
\]
with weight $w(\beta)=\frac{1}{m^{s\cdot |\beta|}}$, is given by
\[
B_m(z)=1+\sum_{j=1}^{\infty}t_m^{j-1}\frac{1}{(m^s)^j} z^{j}=1+\frac{\frac{1}{m^s} z}{1- \frac{1}{m^s}\cdot t_m z}.
\]
Let 
\[
\M_{n,k}=\{\vec{\ell}=(\ell_{k},\ell_{k-1}\dots,\ell_1)\in\N^k\colon 1\le \ell_k\le \dots \le \ell_2\le \ell_1 \le n\}.
\]
All multisets $\M_n=\bigcup_{k=1}^{\infty}\M_{n,k}$, 
with $k$-multisets of $\{1,2,\dots,n\}$ can be combinatorially generated by
\[
\M_n=\mathcal{B}_1\times \mathcal{B}_2\times\dots \times \mathcal{B}_n.
\]
Hence, the generating function $\Theta_n(z,\vec{t})$ is given by the stated formula. The result for the non-truncated multiple zeta values follow by taking the limit. Then, we use the $\exp-\log$ representation and the expansion of $\ln(1-z)$ to get
\begin{align*}
\Theta(z,\vec{t})&=\exp\left(\sum_{m=1}^\infty\ln\Big(1-\frac{z(t_m-1)}{m^s}\Big)-\ln\Big(1-\frac{z t_m}{m^s}\Big)\right)\\
&=\exp\left(\sum_{m=1}^\infty\sum_{j=1}^{\infty}\frac{z^j}{j}\cdot\frac{t_m^j-(t_m-1)^j}{m^{js}}\right)\\
&=\exp\left(\sum_{j=1}^{\infty}\frac{z^j}{j}\Big(\zt((\vec{t}^j,js))-\zt(((\vec{t}-\vec{1})^j,js))\Big)\right).
\end{align*}
\end{proof}

\begin{proof}[Proof of Corollary~\ref{Co1}]
From the expression for $\Theta(z,\vec{t})$ we get
\[
\ztm(\{s\}_{k})=[z^k]\Theta(z,\vec{t})=
[z^k]\left(\prod_{m=1}^\infty\Big(1+\frac{(1-t_m)z}{m^s}\Big)\right)
\left(\prod_{m=1}^\infty\frac1{1- \frac{z t_m}{m^s}}\right).
\]
The former expression is exactly the generating function 
of $\zt(\{(\vec{1}-\vec{t},s)\}_k)$, whereas the latter expression is the generating function 
of $\zts(\{(\vec{t},s)\}_k)$.
\end{proof}

\section{A product for multi-interpolated zeta values\label{SecAlg}}
We discuss algebraic properties of the multi-interpolated multiple zeta values. Following Hoffman~\cite{H2000,H2018,HI}
and Yamamoto~\cite{Y}, let $\mathcal{A}=\{z_1,z_2,\dots\}$ denote a countable set of letters.
Let $\mathbb{Q}\langle\mathcal{A}\rangle$ denote the rational non-commutative polynomial algebra 
and $\mathfrak{h}^{1}$ the underlying rational vector space of $\mathbb{Q}\langle\mathcal{A}\rangle$.
There are two products $\ast$ and $\star$ on $\mathfrak{h}^{1}$ defined by
\[
x \ast 1 =1 \ast x=x,\quad x \star 1 =1 \star x=x,
\]
For words $x=au$, $y=bv$ we have
\begin{equation}
\label{eqn:ast}
x\ast y= a(u\ast bv) + b(au\ast v) + a \diamond b (u\ast v),
\end{equation}
whereas
\begin{equation}
\label{eqn:star}
x\star y= a(u\star bv) + b(au\star v) - a \diamond b (u\star v).
\end{equation}
Here, $\diamond$ denotes the commutative product
\begin{equation}
\label{eqn:diamond}
z_i\diamond z_j=z_{i+j},
\end{equation}
and $z_i\diamond 1=1\diamond z_i=0$. The product $\ast$ corresponds to the multiplication of multiple zeta values,
whereas the $\star$ product to the multiple zeta star values.

\begin{remark}
It is well known that if the $\diamond$ product is defined trivially by $x\diamond y=0$, 
then both products reduce to the shuffle product: $\ast=\star=\shuffle$.
\end{remark}

Let $\mathfrak{h}^0$ be the subspace of $\mathfrak{h}^1$ generated by 1 and monomials that do not start with $z_1$, then the linear
map $Z \colon \mathfrak{h}^0\to\R$, defined by 
\begin{equation}
\label{MapZeta}
Z(z_{i_1}\dots z_{i_k})=\zt(i_1,\dots,i_k)
\end{equation}
and $Z(1)=1$ is a homomorphism from $(\mathfrak{h}^0; \ast)$ to the reals. 

\smallskip

Yamamoto~\cite{Y} introduced the interpolated product $\istuff$, generalizing~\eqref{eqn:ast} and~\eqref{eqn:star}:
$\stuff{1}=\star$, $\stuff{0}=\ast$. He also introduced a map $\Ztt$ from $\mathfrak{h}^{0}$ to $\R$. It maps a word $x=z_{i_1}\dots z_{i_k}\in \mathfrak{h}^0$ to an interpolated MZV:
\[
\Ztt(z_{i_1}\dots z_{i_k})=\ztt(i_1,\dots,i_k).
\]
Moreover, for $x,y\in \mathfrak{h}^0$ the product $\istuff$ satisfies the important relation
\[
\Ztt(x \istuff y)=\Ztt(x)\cdot \Ztt(y).
\]

\smallskip

The refinement of interpolated MZVs to multi-interpolated MZVs in Definition~\ref{DefMuli} 
suggests to look at corresponding generalizations of the map $\Ztt$ and the product $\istuff$. In the following we introduce a multi-interpolated product $\mstuff$, as well as a map $\Ztm$
such that for $x,y\in \mathfrak{h}^0$ it holds
\begin{equation}
\label{eqn:MainResult}
 \Ztm(x \mstuff y)=\Ztm(x)\cdot \Ztm(y).
\end{equation}

In order to make this precise, we first define the map $\Ztm$. For this purpose, we introduce a new variable, which we denote with $\vec{t}$. 
It does not commute with any letters $z_i$ of the alphabet $\mathcal{A}$. Let $\mathbb{Q}\langle\mathcal{A},\vec{t}\rangle$ denote the algebra of noncommutative polynomials
in the letters $z_1,z_2,\dots\in\mathcal{A}$ and the symbol $\vec{t}$, where we denote with  $\vec{t}^m$ a string consisting of $m$ occurrences of $\vec{t}$, $m\ge 0$. 
We denote with $\hneu$ the subspace, generated by 1 and monomials that do not start with
$\vec{t}^j z_1$, $j\ge 0$ or end with a letter $\vec{t}$. Non-empty words $x\in\hneu$ have the form
\begin{equation}
\label{word:x}
x=\vec{t}^{m_1}z_{i_1}\vec{t}^{m_2}z_{i_2}\dots \vec{t}^{m_n}z_{i_n},
\end{equation}
with $n\ge 1$ and $m_1,\dots m_1\ge 0$, where $i_1\neq 1$.

\smallskip

The map $\Ztm$ has domain $\hneu$ and codomain $\R[[\vec{t}]]$ and sends words $x\in \hneu$~\eqref{word:x} to multiple zeta values
with variables $t_1,t_2,\dots$, as introduced in Definition~\ref{DefNewZeta1}:
\begin{equation}
\label{def:Ztm}
\Ztm(\vec{t}^{m_1}z_{i_1}\vec{t}^{m_2}z_{i_2}\dots \vec{t}^{m_n}z_{i_n})
=\ztm\big((\vec{t}^{m_1},i_1),(\vec{t}^{m_2},i_2),\dots, (\vec{t}^{m_n},i_n)\big).
\end{equation}

\smallskip 

For the structural analysis of $\Ztm$ we will revisit the map $Z$~\eqref{MapZeta}, as well as the ordinary stuffle product $\ast$~\eqref{eqn:ast}.
We extend the domain and codomain of $Z$ to $\hneu$ and $\R[[\vec{t}]]$, respectively:
\begin{equation}
\label{MapZeta2}
Z(\vec{t}^{m_1}z_{i_1}\vec{t}^{m_2}z_{i_2}\dots \vec{t}^{m_n}z_{i_n})=\zt\big((\vec{t}^{m_1},i_1),(\vec{t}^{m_2},z_2)\dots (\vec{t}^{m_n},i_n)\big).
\end{equation}
\begin{example}
We emphasize the non-commutativity of the letter $\vec{t}$ with the letters $z_j\in\mathcal{A}$. Let $x=z_{i_1}\vec{t}z_{i_2}$ and  $y=\vec{t}z_{i_1}z_{i_2}$.
Then, 
\[
Z(x)=Z(z_{i_1}\vec{t}z_{i_2})=\sum_{\ell_1> \ell_2\ge 1}\frac{t_{\ell_2}} {\ell_1^{i_1}\ell_2^{i_2}}\neq
Z(y)=Z(\vec{t}z_{i_1}z_{i_2})=\sum_{\ell_1> \ell_2\ge 1}\frac{t_{\ell_1}} {\ell_1^{i_1}\ell_2^{i_2}}.
\]
In contrast for $\vec{t}=t$, see Remark~\ref{rem:sigma}, we would have obtained the same result $t\cdot\zeta(i_1,i_2)$, as 
\[
z_{i_1}t z_{i_2}=t z_{i_1}z_{i_2}.
\]
\end{example}

Next, define how ordinary stuffle product $\ast$~\eqref{eqn:ast} for MZVs acts on words in $\hneu$.
Let $x=\vec{t}^{m_1}a_1 u\in \hneu$ with $u=\vec{t}^{m_2}a_2\dots \vec{t}^{m_n}a_n$ and $y=\vec{t}^{k_1}b_1v\in \hneu$ with $v=\vec{t}^{k_2}a_2\dots \vec{t}^{k_r}b_r$,
with $a_\ell,b_j\in\mathcal{A}$, $1\le \ell\le n$ and $1\le j\le r$. 
Then, $x\ast y$ is defined by
\begin{equation}
\label{eqn:stuffleGeneral}
x\ast y=\vec{t}^{m_1}a_1(u\ast y) + \vec{t}^{k_1}b_1 (x\ast v) + (\vec{t}^{m_1}a_1 \diamond \vec{t}^{k_1}b_1) (u\ast v).
\end{equation}
where we extend the definition of the product $\diamond$~\eqref{eqn:diamond} to
\begin{equation}
\label{eqn:diamondNew}
\vec{t}^{c} z_i\diamond \vec{t}^{d} z_j= \vec{t}^{c+d}z_{i+j},\quad c,d\ge 0.
\end{equation}

\smallskip

Next, we introduce the product $\mstuff$. 
\begin{defi}[Product $\mstuff$]
Let $x,y\in\mathfrak{h}^0$ denote two words and $\vec{t}$ a variable non-commutative with the letters $z_k\in\mathcal{A}$. If $y=1$ then
\[
x \mstuff 1 =1 \mstuff x=x.
\]
For $x=a\in\mathcal{A}$ and $y=b\in\mathcal{A}$ single letter words we have
\[
x\mstuff y= a\mstuff b = ab + ba + (1-2\vec{t})a \diamond b.
\]
For words $x=au\in\mathfrak{h}^0$, $y=bv\in\mathfrak{h}^0$ we have
\[
x\mstuff y= a(u\mstuff bv) + b(au\mstuff v) + (1-2\vec{t})a \diamond b (u\mstuff v)
+(\vec{t}^2-\vec{t})a \diamond b \diamond (u\mstuff v).
\] 
\end{defi}
\begin{remark}
Our definition looks similar to the definition of Yamamoto~\cite{Y}.
Indeed, when the variable $\vec{t}$ is evaluated into $t$~\eqref{eqn:tgleicht}, then $\mstuff \ = \ \istuff$.
We emphasize again the key difference, namely the non-commutativity of $\vec{t}$ with the letters $z_k\in\mathcal{A}$ as .
\end{remark}

\begin{remark}
By~\eqref{def:ztm} and~\eqref{eqn:MainResult} we are mainly interested in words $x,y\in\mathfrak{h}^0$. 
However, we can readily extend the definition of $\mstuff$ to words $x,y\in\hneu$, compare with~\eqref{eqn:stuffleGeneral}:
\begin{equation*}
\begin{split}
x\mstuff y&= \vec{t}^{m_1}a_1(u\mstuff \vec{t}^{k_1}b_1 v) + \vec{t}^{k_1}b_1(\vec{t}^{m_1}a_1u\mstuff v) + (1-2\vec{t})\vec{t}^{m_1+k_1}a_1 \diamond b_1 (u\mstuff v)\\
&\quad+(\vec{t}^2-\vec{t})\vec{t}^{m_1+k_1} a_1 \diamond b_1 \diamond (u\mstuff v).
\end{split}
\end{equation*} 

\end{remark}

\begin{example}
\label{ExM1}
Let $x=a=z_i$ and $y=b=z_j$. 
Then 
\[
x\mstuff y= z_i\mstuff z_j = z_iz_j + z_jz_i + (1-2\vec{t})z_{i+j}.
\]
\end{example}

\begin{example}
\label{ExM2}
Let $x=a=z_i$ and $y=bz_k=z_jz_k$. 
\begin{align*}
x\mstuff y&= z_iz_jz_k + z_j(z_i\mstuff z_k) + (1-2\vec{t})z_{i+j}z_k
+(\vec{t}^2-\vec{t})z_{i+j+k}\\
&=z_iz_jz_k+ z_j z_iz_k + z_jz_kz_i + z_j(1-2\vec{t})z_{i+k}\\
&\quad+ (1-2\vec{t})z_{i+j}z_k+(\vec{t}^2-\vec{t})z_{i+j+k}.
\end{align*}
\end{example}

\smallskip

In order to obtain the result~\eqref{eqn:MainResult} we need more insight into the map $\Ztm$.
We will realize the map $\Ztm$ using the map $Z$~\eqref{MapZeta},~\eqref{MapZeta2} and a new operator $S^{\vec{t}}$, generalizing the interpolating operator in~\cite{Y}.

\begin{defi}[Multi-interpolation operator]
For the empty word 1 and a single letter $a\in\mathcal{A}$ the multi-interpolation operator $S^{\vec{t}}$ is defined as
\[
S^{\vec{t}} (1)=1,\quad S^{\vec{t}}(a)=a.
\]
For $x=au\in\mathfrak{h}^0$, where $u$ denotes a substring, we set
\begin{equation}
\label{def:multIntOp}
S^{\vec{t}}(x)=S^{\vec{t}}(au)=a S^{\vec{t}}(u)+\vec{t} a\diamond S^{\vec{t}}(u).
\end{equation}
\end{defi}
\begin{remark}
Of course, one can also consider words $x\in\hneu$,
\[
x=\vec{t}^{m_1}a_1 u,\quad u=\vec{t}^{m_2}a_2\dots \vec{t}^{m_n}a_n,
\]
where we define
\begin{equation}
\label{def:multIntOpGen}
S^{\vec{t}}(x)=S^{\vec{t}}(\vec{t}^{m_1}a_1u)=\vec{t}^{m_1}a_1 S^{\vec{t}}(u)+\vec{t}^{m_1+1} a_1\diamond S^{\vec{t}}(u).
\end{equation}
\end{remark}

\begin{example}
Let $x=z_i z_j z_k$ denote word of length three. 
Then
\begin{align*}
S^{\vec{t}}(x)&=z_i S^{\vec{t}}(z_j z_k) + \vec{t}z_i\diamond S^{\vec{t}}(z_j z_k) \\
&=z_i z_j z_k + z_i \vec{t} z_j\diamond z_k + \vec{t}z_i\diamond z_j z_k + \vec{t}z_i\diamond \vec{t} z_j\diamond z_k\\
&=z_i z_j z_k + z_i \vec{t} z_{j+k} + \vec{t}z_{i+j} z_k + \vec{t}^2z_{i+j+k}.
\end{align*}
This corresponds exactly to the decomposition of the multi-interpolated multiple zeta value $\ztm(i,j,k)$ in Example~\ref{ExDeco3} 
and we see that
\[
(Z\circ S^{\vec{t}})(z_i z_j z_k)=\Ztm(z_i z_j z_k).
\]
\end{example}

\smallskip

In order to describe $S^{\vec{t}}(x)$ for a word $x$ let $R_n$, $n\in\N$, denote the set of subsequences $r = (r_0, \dots, r_s)$
of $(0,\dots, n)$ such that $r_0 = 0$ and $r_s = n$. For such $r$ and a word
$x = a_1\dots a_n$, we define the word $\Cont{r}(x)$ with respect to $\vec{t}$. 
It is the weighted contraction of $x$ with respect to $r$, weighted according 
\begin{equation}
\label{def:contr}
\Cont{r}(x) = \vec{t}^{r_{1}-r_0-1}b_1\cdot \vec{t}^{r_{2}-r_1-1}b_2 \cdots \vec{t}^{r_{s}-r_{s-1}-1}b_s, \quad b_i = a_{r_i+1}\diamond \dots \diamond a_{r_{i+1}} .
\end{equation}

\begin{lem}[Properties - Multi-interpolation operator]
\label{lem:InterpolationOperator}
Let $x = a_1\dots a_n$. The operator $S^{\vec{t}}$ has the properties
\begin{enumerate}[label=\alph*)]
\item $S^{\vec{t}}(x)=\sum_{r\in R_n}\Cont{r}(x)$,
	\item $(S^{\vec{t}}-1)^n x=0$,
	\item Assume that symbols $\vec{t}_1$, $\vec{t}_2$ commute with each other, but are non-commutative with respect to letters
	$z_k\in\mathcal{A}$. Let $\circ$ denote the operator composition, then
	\[
		S^{\vec{t}_1+\vec{t}_2}=S^{\vec{t}_1} \circ S^{\vec{t}_2}.
	\]
	\item Let $b\in\mathcal{A}$:
	\[
	b\diamond S^{\vec{t}}(x)=S^{\vec{t}}(b \diamond x).
	\]
	\end{enumerate}
\end{lem}

\begin{remark}
Note that for $\vec{t}=t$ the weighted contraction simplifies to
\[
\Cont{r}(x)=t^{\sum_{i=1}^{s}(r_{i}-r_{i-1}-1)}\Con{r}(x)=t^{n-s}\Con{r}(x)=t^{\sigma(r)}\Con{r}(x),
\]
where $\Con{r}(x)=b_1\cdots b_s$ denotes the standard contraction operator of~\cite{Y}.
\end{remark}

\begin{proof}[Proof of Lemma~\ref{lem:InterpolationOperator}]
The first part follows directly from the definition. 
Part (b) is shown using induction. We actually prove the stronger statement that
$(S^{\vec{t}}-1)^n x=0$ for $x=\vec{t}^{m_1}a_1\vec{t}^{m_2}a_2\dots \vec{t}^{m_n}a_n\in\hneu$.
 The statement is obviously true for an empty word $1$, as well as a single letter word $x=\vec{t}^{m}a$. 
Let $x=\vec{t}^{m_1}a_1u$ with $a_1$ a single letter and $u=\vec{t}^{m_2}a_2\dots \vec{t}^{m_n}a_n$ the subword.
By 
\[
(S^{\vec{t}}-1)^n x=\vec{t}^{m_1}(S^{\vec{t}}-1)^{n-1}(S^{\vec{t}}-1)a_1u.
\]
By a slight generalization of part a we observe $(S^{\vec{t}}-1)a_1u$ is a combination of words of length less or equal $n-1$.
Hence, by the induction hypothesis the expression is equal to zero. For part (c) we have to argue in combinatorial way, reducing the identity to a counting problem. 
By definition part a) and~\eqref{def:contr}, we have 
\[
S^{\vec{t}_1+\vec{t}_2}(x)=\sum_{r\in R_n}(\vec{t}_1+\vec{t}_2)^{r_{1}-r_0-1}b_1 \cdots (\vec{t}_1+\vec{t}_2)^{r_{s}-r_{s-1}-1}b_s.
\]
Assume that $1\le i\le s$ and $\rho_i=r_{i}-r_{i-1}-1$.
Expanding by the binomial theorem gives 
\[
(\vec{t}_1+\vec{t}_2)^{\rho_i}=\sum_{\ell=0}^{\rho_i} \binom{\rho_i}{\ell} \vec{t}_1^{\rho_i-\ell} \vec{t}_2^{\ell}.
\]
For each resulting block $b_i= a_{r_i+1}\diamond \dots \diamond a_{r_{i+1}}$, there have to be exactly $\rho_i$ contractions leading to it.
Out of these $\rho_i$ contractions, we can choose $0\le \ell\le \rho_i$ of them to stem from the application of $S^{\vec{t}_2}$. 
The subblocks leading to $b_i$ all have the common prefactor $\vec{t}_2^{\ell}$, 
as already $\ell$ contractions have occurred. As there are in total $\rho_i$ letters merged into $b_i$, we have $\binom{\rho_i}{\ell}$ words in $\Cont{r}(x)$, with $r\in R_n$, 
leading after application of $S^{\vec{t}_1}$ to the block $b_i$. The application of $S^{\vec{t}_1}$ to these words leads to an additional factor $\vec{t}_1^{\rho_i-\ell}$, 
as we contract the remaining letters to obtain $b_i$. 
Concerning the final statement (d) we observe that
\begin{align*}
	S^{\vec{t}}(b\diamond x)&
	= S^{\vec{t}}(b\diamond \vec{t}^{m_1}a_1u)
	= S^{\vec{t}}\big(\vec{t}^{m_1} b\diamond a_1 u\big )\\
	&= \vec{t}^{m_1} b\diamond a_1 S^{\vec{t}}(u)+\vec{t}^{m_1+1} b\diamond a_1 \diamond S^{\vec{t}}(u)\\
	&=b\diamond \big(\vec{t}^{m_1} a_1 S^{\vec{t}}(u)+\vec{t}^{m_1+1} a_1 \diamond S^{\vec{t}}(u)\big)
	=b\diamond S^{\vec{t}}(x). 
\end{align*}
\end{proof}

We also collect another property of $\mathcal{S}^{\vec{t}}$ when combined a suitably defined derivative. 
\begin{defi}[Derivative for symbol $\vec{t}$]
The action of the differential operator $\frac{d}{d\vec{t}}$ to $x\in\hneu$ is given as follows. For $x=a\in\mathfrak{h}^0$ we have 
\[
\frac{d}{d\vec{t}} a=0,\quad \frac{d}{d\vec{t}} \vec{t}^m a=m\cdot \vec{t}^{m-1} a.
\]
For $x=\vec{t}^{m_1}a_1 u\in \hneu$ with $u=\vec{t}^{m_2}a_2\dots \vec{t}^{m_n}a_n$ with $n\ge 2$ we define
\[
\frac{d}{d\vec{t}} x =m_1\vec{t}^{m_1-1}a_1 u + \vec{t}^{m_1}a_1 \frac{d}{d\vec{t}} u,
\]
\end{defi}
We observe the following. 
\begin{lem}
\label{lem:derivative}
Let $x=a_1\dots a_n$. The differential operator $\frac{d}{d\vec{t}}$ acting on $S^{\vec{t}}(x)$ can be decomposed into contractions:
\[
\frac{d}{d\vec{t}}S^{\vec{t}}(x)=\sum_{k=1}^{n-1}S^{\vec{t}}(a_1\dots a_{k-1} a_{k}\diamond a_{k+1} a_{k+2}\dots a_n).
\]
\end{lem}

\begin{proof}
We provide two different proofs. First, we argue in a combinatorial way. By Lemma~\ref{lem:InterpolationOperator} (a) each word in $S^{\vec{t}}(x)$
has the form 
\[
\Cont{r}(x) = \vec{t}^{r_{1}-r_0-1}b_1\cdot \vec{t}^{r_{2}-r_1-1}b_2 \cdots \vec{t}^{r_{s}-r_{s-1}-1}b_s, \quad b_i = a_{r_i+1}\diamond \dots \diamond a_{r_{i+1}} .
\]
Application of the derivative to $\Cont{r}(x)$ leads to $s$ words, each with multiplicative factor $\rho_i=r_i-r_{i-1}-1$ and the corresponding power of $\vec{t}$ at position $i$ diminished by one. 
Out of the words in $\sum_{k=1}^{n-1}S^{\vec{t}}(a_1\dots a_{k-1}a_{k}\diamond a_{k+1}\dots a_n)$
exactly the words with $r_{i-1}\le k\le r_{i}$ can be mapped to the resulting word in $\frac{d}{d\vec{t}}\Cont{r}(x)$.
Since every word in $\frac{d}{d\vec{t}}\Cont{r}(x)$ is covered like this, we obtain the stated result.

\smallskip 

Our second proof uses induction. We use the shorthand notation $a_{i,j}=a_i\dots a_j$ for $i\le j$.
We have
\[
\frac{d}{d\vec{t}}S^{\vec{t}}(x)
=\frac{d}{d\vec{t}}\Big(a_1 S^{\vec{t}}(a_{2,n})+\vec{t}a_1 \diamond S^{\vec{t}}(a_{2,n})\Big)
=a_1\frac{d}{d\vec{t}}S^{\vec{t}}(a_{2,n})+\frac{d}{d\vec{t}}\vec{t}a_1\diamond S^{\vec{t}}(a_{2,n}).
\]
The term $S^{\vec{t}}(a_{2,n})$ is directly covered by the induction hypothesis. By the ordinary product rule and by Lemma~\ref{lem:InterpolationOperator} (d) we get further
\[
\frac{d}{d\vec{t}}\vec{t}a_1\diamond S^{\vec{t}}(a_{2,n})
=S^{\vec{t}}(a_1\diamond a_{2}a_{3,n})+\vec{t}\frac{d}{d\vec{t}}S^{\vec{t}}\big((a_1\diamond a_{2})a_{3,n}\big).
\] 
The induction hypothesis applies to the second summand and we get
\begin{align*}
&\frac{d}{d\vec{t}}S^{\vec{t}}(a_1\diamond a_{2,n})
= \frac{d}{d\vec{t}}S^{\vec{t}}\big((a_1\diamond a_2) a_{3,n}\big)\\
&= S^{\vec{t}}\big((a_1\diamond a_2 \diamond a_{3})a_{4,n}\big)+\dots + S^{\vec{t}}\big((a_1\diamond a_2)a_{3,n-2}(a_{n-1}\diamond a_{n})\big).
\end{align*}
Collecting all contributions, the induction hypothesis implies that
\begin{align*}
\frac{d}{d\vec{t}}S^{\vec{t}}(x)&
=a_1\sum_{k=2}^{n}S^{\vec{t}}\big(a_{2,k-1} (a_k\diamond a_{k+1})a{k+2,n}\big) +S^{\vec{t}}\big((a_1\diamond a_{2})a_{3,n}\big)\\
&+\vec{t}\Big( S^{\vec{t}}\big((a_1\diamond a_2 \diamond a_{3})a_{4,n}\big)+\dots + S^{\vec{t}}\big((a_1\diamond a_2)a_{3,n-2}(a_{n-1}\diamond a_{n})\big)\Big).
\end{align*}
On the other hand, we can decompose the righthandside of the equation.
\begin{align*}
&\sum_{k=1}^{n-1}S^{\vec{t}}\big(a_{1,k-1}(a_{k}\diamond a_{k+1})a_{k+2,n}\big)
=S^{\vec{t}}((a_{1}\diamond a_2) a_{3,n})\\
&\quad+\sum_{k=2}^{n-1}S^{\vec{t}}\big(a_{1,k-1}(a_{k}\diamond a_{k+1})a_{k+2,n}\big)
\end{align*}
Furthermore,
\begin{align*}
S^{\vec{t}}\big(a_{1,k-1}a_{k}\diamond a_{k+1,n}\big)&=
a_1 S^{\vec{t}}\big(a_{2,k-1}(a_{k}\diamond a_{k+1})a_{k+2,n}\big)\\
&\quad+\vec{t} S^{\vec{t}}\big((a_1\diamond a_{2})a_{3,k-1}(a_{k}\diamond a_{k+1})a_{k+2,n}\big).
\end{align*}
This provides the induction step and thus the stated result.
\end{proof}


A direct application of Lemma~\ref{lem:derivative} is a generalization of the so-called alternating sum formula~\cite{IKOO}:
\begin{equation}
\label{eqn:alternatingOriginal}
\sum_{k=0}^{n}(-1)^k (a_1\cdots a_k) \ast S(a_n\cdots a_{k+1})=0.
\end{equation}
This formula was extended to interpolated MZVs~\cite[Proposition 3.7]{Y}
\[
\sum_{k=0}^{n}(-1)^k S^{t}(a_1\cdots a_k) \ast S^{1-t}(a_n\cdots a_{k+1})=0. 
\]
\begin{thm}[Alternating sum formula]
The multi-interpolation operator $S^{\vec{t}}$ satisfies
\[
\sum_{k=0}^{n}(-1)^k S^{\vec{t}}(a_1\cdots a_k) \ast S^{\vec{1}-\vec{t}}(a_n\cdots a_{k+1})=0. 
\]
\end{thm}
\begin{proof}
We use induction with respect to $n$. For $n=1$ we have
\[
S^{\vec{t}}(a_1)\cdot 1 - S^{\vec{1}-\vec{t}}(a_1)\cdot 1=a_1-a_1=0.
\]
For $n=2$ observe that
\begin{align*}
&S^{\vec{t}}(a_1a_2)\cdot 1 - S^{\vec{t}}(a_1)S^{\vec{1}-\vec{t}}(a_2)+ S^{\vec{1}-\vec{t}}(a_2a_1)\\
&\quad =a_1a_2 +\vec{t}a_1\diamond a_2 - \big(a_1a_2 +a_2a_1 -a_1\diamond a_2 \big) + a_2a_1+(\vec{1}-\vec{t})a_2\diamond a_1=0.
\end{align*}
Assume now that $n\ge 2$. We observe that the left-hand side is given by a sum of words of the form $x=\vec{t}^m_1b_1\dots \vec{t}^m_kb_k$, with $1\le k\le n$ 
plus words without any powers of $\vec{t}$. By the original alternating sum formula~\eqref{eqn:alternatingOriginal} we know that 
for $\vec{t}=0$ the truth of the statement. 
If there are words of a specific length $k$ of the form $x=\vec{t}^m_1b_1\dots \vec{t}^m_kb_k$,
such that their sum does not vanish, then the derivative $\frac{d}{d\vec{t}}$ 
results in words of the form $\vec{t}^m_1b_1\dots \vec{t}^{m_{r}-1} b_r\dots \vec{t}^m_kb_k$, $1\le r\le k$, 
which cannot vanish either. Hence, it suffices to prove that 
\[
\frac{d}{d\vec{t}} \sum_{k=0}^{n}(-1)^k S^{\vec{t}}(a_{1,k}) \ast S^{\vec{1}-\vec{t}}(A_{n,k+1})=0,
\]
where used again the shorthand notation $a_{i,j}=a_i\dots a_j$ for $i\le j$ and also $A_{j,i}=a_j\dots a_i$.
We can now proceed identically to~\cite{Y} and obtain for this derivative of the left-hand side by Lemma~\ref{lem:derivative} and the Leibniz rule
\begin{align*}
&\sum_{k=2}^{n}(-1)^k \sum_{i=1}^{k-1} S^{\vec{t}}(a_{1,i-1}a_i\diamond a_{i+1} a_{i+2,k}) \ast S^{\vec{1}-\vec{t}}(A_{n,k+1})\\
&\quad - \sum_{k=0}^{n-2}(-1)^k \sum_{i=k+1}^{n-1} S^{\vec{t}}(a_{1,k}) \ast S^{\vec{1}-\vec{t}}(A_{n,i+2}a_{i+1}\diamond a_i A_{i-1,k} )\\
&=-\sum_{i=1}^{n-1}\bigg(\sum_{k=0}^{i-1}(-1)^k S^{\vec{t}}(a_{1,k})\ast S^{\vec{1}-\vec{t}}(A_{n,i+2}a_{i+1}\diamond a_i A_{i-1,k} )\\
&\quad + \sum_{k=i+1}^{n}(-1)^{k-1} S^{\vec{t}}(a_{1,i-1}a_i\diamond a_{i+1} a_{i+2,k}) \ast S^{\vec{1}-\vec{t}}(A_{n,k+1}).
\bigg)
\end{align*}
By the induction hypothesis the sums inside the brackets vanish, which proves the stated result.
\end{proof}

Now we turn to the statement of our main result.

\begin{thm}
Let $\mstuff$ denote the commutative product on the algebra $\mathbb{Q}\langle\mathcal{A}\rangle[\vec{t}]$. 
The map $S^{\vec{t}}$ is an isomorphism from $(\hneu,\mstuff)$ to $(\hneu,\mstuff)$, 
such that
\[
S^{\vec{t}}(x\mstuff y)=S^{\vec{t}}(x)\ast S^{\vec{t}}(y).
\]

\smallskip

The map $\Ztm$~\eqref{def:Ztm} can be decomposed into the map $Z$~\eqref{MapZeta},~\eqref{MapZeta2} 
and the multi-interpolation operator $S^{\vec{t}}$~\eqref{def:multIntOp}, such that $\Ztm =Z \circ S^{\vec{t}}$.
Moreover,
\[
 \Ztm(x \mstuff y)= \Ztm(x)\cdot  \Ztm(y).
\]
\end{thm}

\begin{remark}
If we define $\ztm\big( (i_1,\dots,i_k)\istuff (j_1,\dots,j_\ell)\big)$ as 
$ \Ztm(x \mstuff y)$ with $x=z_{i_1}\dots z_{i_k}$ and $y=z_{j_1}\dots z_{j_\ell}$, 
then we write 
\[
\ztm\big( (i_1,\dots,i_k)\istuff (j_1,\dots,j_\ell)\big)=\ztm(i_1,\dots,i_k)\cdot\ztm( j_1,\dots,j_\ell),
\]
compare with~\eqref{eqn:YamamotoIdentity}.
\end{remark}

\begin{example}
Let $x=a=z_i$ and $y=b=z_j$. By Example~\ref{ExM1} we get
\begin{align*}
\Ztm(x \mstuff y)&= \Ztm(z_iz_j) + \Ztm(z_jz_i) + \Ztm\big((1-2\vec{t})z_{i+j}\big)\\
&=\ztm(i,j) + \ztm(j,i) + \sum_{\ell \ge  1}\frac{1-2t_{\ell}} {\ell^{i+j}}\\
&=\zt(i,j) +\zt\big((i+j)\vt\big) +\zt(j,i) +\zt\big((i+j)\vt\big) + \sum_{\ell \ge  1}\frac{1-2t_{\ell}} {\ell^{i+j}}\\
&=\zt(i,j)+\zt(j,i)+\zt(i+j)=\ztm(i)\ztm(j)=\Ztm(z_i)\cdot \Ztm(z_j),
\end{align*}
since $\ztm(i)=\zt(i)$ and $\ztm(j)=\zt(j)$.
\end{example}

\begin{example}
Let $x=a=z_i$ and $y=bz_k=z_jz_k$. By Example~\ref{ExM2} we get
\begin{align*}
\Ztm(x\mstuff y) &=\zt(i,j,k)+\zt(i+j,k)+\zt(j,i,k)+\zt(j,i+k)+\zt(j,i,k)\\
&\quad + \zt(i,\vec{t}(j+k))+ \zt(\vec{t}(i+j+k))+ \zt(\vec{t}(j+k),i),
\end{align*}
which is exactly the product of $\Ztm(z_i)=\zt(i)$ and $\Ztm(z_jz_k)=\zt(j,k)+\zt((\vec{t},j+k))$.
\end{example}

\begin{proof}
By induction with respect to the number of letters $z_k\in\mathcal{A}$, te multi-interpolation map $S^{\vec{t}}$ is injective. 
It is also surjective, as $S^{\vec{t}}\big(S^{-\vec{t}}(x)\big)=S^{0}(x)=x$.
Once, the homomorphism is established, the result for $\Ztm$ follows directly by Theorem~\ref{the:decomposition} and Lemma~\ref{lem:InterpolationOperator},
as well as an application of $Z$. In order to prove the homomorphism,  
we follow closely the strategy of~\cite{Y} and also use results of, so one should give much credit to these works.
For the sake of simplicity, we present the proof solely for words $x,y\in\mathfrak{h}^0$, as the general case $x,y\in\hneu$ 
is more involved, to the additional occurrences~\eqref{word:x} of $\vec{t}$.
We proceed by induction with respect to the total length of the words. 
For $x=1$ or $y=1$ or both, this is obviously true. 
For $x=a$ and $y=b$, corresponding to Example~\ref{ExM1},  we have
\begin{align*}
S^{\vec{t}}(a\mstuff b)&=S^{\vec{t}}\big(ab + b a + (1-2\vec{t})a \diamond b\big) \\
&=ab + \vec{t}a\diamond b + b a +\vec{t} b\diamond a + (1-2\vec{t})a \diamond b\\
&=ab+b a +2\vec{t} a\diamond b + a\diamond b -2\vec{t}a \diamond b\\
&=ab+b a + a\diamond b =S^{\vec{t}}(a)\ast S^{\vec{t}}(b).
\end{align*}

Assume that $x=au$ and $y=bv$.
On one hand, we have
\[
S^{\vec{t}}(x)\ast S^{\vec{t}}(y)
=\big(aS^{\vec{t}}(u) + \vec{t}a\diamond S^{\vec{t}}(u)  \big)\ast \big(bS^{\vec{t}}(v) + \vec{t}b\diamond S^{\vec{t}}(v) \big).
\]
Let $U=S^{\vec{t}}(u)$ and $V=S^{\vec{t}}(v)$.
Thus,
\begin{align*}
S^{\vec{t}}(x)\ast S^{\vec{t}}(y)
=\big(aU + \vec{t}a\diamond U  \big)\ast \big(bV + \vec{t}b\diamond V \big).
\end{align*}
Expanding the term above gives
\begin{align}
\label{EqEqual}
S^{\vec{t}}(x)\ast S^{\vec{t}}(y)&=(aU) \ast (bV) + (aU)\ast (\vec{t}b\diamond V)\\ \nonumber
&\quad +(\vec{t}a\diamond U) \ast (bV)+(\vec{t}a\diamond U) \ast (\vec{t}b\diamond V).
\end{align}

On the other hand, by definition of $\mstuff$ we have
\begin{align*}
S^{\vec{t}}(x\mstuff y)&=S^{\vec{t}}\big(a(u\mstuff b v)\big) + S^{\vec{t}}\big(b(a u\mstuff v)\big) + S^{\vec{t}}\big((1-2\vec{t})a \diamond b (u\mstuff v)\big)\\
&\quad+S^{\vec{t}}\big((\vec{t}^2-\vec{t})a \diamond b \diamond (u\mstuff v)\big).
\end{align*}
By the definition of $S^{\vec{t}}$ we get further
\begin{align*}
S^{\vec{t}}(x\mstuff y)&=a S^{\vec{t}}\big(u\mstuff b v\big) + \vec{t}a\diamond S^{\vec{t}}\big(u\mstuff b v\big) \\
&\quad+ b S^{\vec{t}}\big(a u\mstuff v\big) + \vec{t}b\diamond S^{\vec{t}}\big(a u\mstuff v\big)\\
&\quad+ (1-2\vec{t})a \diamond b S^{\vec{t}}\big(u\mstuff v\big)+(1-2\vec{t})\vec{t}a \diamond b \diamond S^{\vec{t}}\big(u\mstuff v\big)\\
&\quad+S^{\vec{t}}\big((\vec{t}^2-\vec{t})a \diamond b \diamond (u\mstuff v)\big).
\end{align*}
Simplification by Lemma~\ref{lem:InterpolationOperator} gives
\begin{align*}
S^{\vec{t}}(x\mstuff y)&=a S^{\vec{t}}\big(u\mstuff b v\big) + \vec{t}a\diamond S^{\vec{t}}\big(u\mstuff b v\big) \\
&\quad+ b S^{\vec{t}}\big(a u\mstuff v\big) + \vec{t}b\diamond S^{\vec{t}}\big(a u\mstuff v\big)\\
&\quad+ (1-2\vec{t})a \diamond b S^{\vec{t}}\big(u\mstuff v\big)-\vec{t}^2 a\diamond b \diamond S^{\vec{t}}(u\mstuff v)\big).
\end{align*}
Let $U=S^{\vec{t}}(u)$ and $V=S^{\vec{t}}(v)$. By the induction hypothesis, we get further
\begin{align*}
S^{\vec{t}}(x\mstuff y)&=a \big(U\ast S^{\vec{t}}(b v)\big) + \vec{t}a\diamond \big(U\ast S^{\vec{t}}(b v)\big)\\
&\quad+ b S^{\vec{t}}(au)\ast S^{\vec{t}}(v) + \vec{t}b\diamond S^{\vec{t}}\big(a u\mstuff v\big)\\
&\quad+ (1-2\vec{t})(a \diamond b) (U\ast V) - \vec{t}^2 a\diamond b \diamond S^{\vec{t}}(U\ast V).
\end{align*}
Using again the definition of $S^{\vec{t}}$, we obtain
\begin{align*}
S^{\vec{t}}(x\mstuff y)&=a \big(U\ast (bV)\big) + a \big(U\ast (\vec{t} b\diamond V)\big) \\
&\quad + \vec{t}a\diamond (U\ast bV)+ \vec{t}a\diamond \big(U\ast (\vec{t} b\diamond V)\big)\\
&\quad+ b \big((a U)\ast V\big) + b \big((\vec{t} a\diamond U)\ast V\big) \\
&\quad+ \vec{t}b\diamond \big((a U)\ast V\big) +  \vec{t}b\diamond \big((\vec{t}a \diamond U)\ast V\big)\\
&\quad+ (1-2\vec{t})(a \diamond b) (U\ast V) - \vec{t}^2(a\diamond b \diamond (U\ast V)\big).
\end{align*}
We use the following identities of~\cite{IKOO} (see also \cite{Y}):
\begin{align*}
(a\diamond U)\ast (bV)&= a\diamond\big(U\ast (bV)\big)+b\big((a\diamond U)\ast V\big) - (a\diamond b)(U\ast V),\\
(aU)\ast (b\diamond V)&= b\diamond \big((a U)\ast V\big)+a\big(U\ast (b\diamond V)\big)- (a\diamond b)(U\ast V),\\
(a\diamond U)\ast (b\diamond V)&=a\diamond \big(U\ast(b\diamond V)\big)+b\diamond\big((a\diamond U)\ast V\big)- (a\diamond b)\diamond(U\ast V).
\end{align*}
They imply that
\begin{align*}
&\vec{t}a\diamond\big(U\ast (bV)\big)+b\big((\vec{t}a\diamond U)\ast V\big)= (\vec{t}a\diamond U)\ast (bV) + \vec{t}(a\diamond b)(U\ast V),\\
&\vec{t}b\diamond \big((a U)\ast V\big)+a\big(U\ast (\vec{t}b\diamond V)\big)= (aU)\ast (\vec{t}b\diamond V)+ \vec{t}(a\diamond b)(U\ast V),\\
&\vec{t}a\diamond \big(U\ast(\vec{t}b\diamond V)\big)+\vec{t}b\diamond\big((\vec{t}a\diamond U)\ast V\big)\\
&\qquad=(\vec{t}a\diamond U)\ast (\vec{t}b\diamond V)+ \vec{t}^2(a\diamond b)\diamond(U\ast V).
\end{align*}
Together with the definition
\[
(aU)\ast (bV)=a\big(U\ast (bV)\big) + b((aU)\ast V)+ (a\diamond b)(U\ast V),
\]
we get the desired result
\begin{align*}
S^{\vec{t}}(x\mstuff y)&= (\vec{t}a\diamond U)\ast (bV)  + (aU)\ast (\vec{t}b\diamond V)\\
&\qquad +(\vec{t}a\diamond U)\ast (\vec{t}b\diamond V)+(aU)\ast (bV),
\end{align*}
which equals $S^{\vec{t}}(x)\ast S^{\vec{t}}(y)$~\eqref{EqEqual}.
In the general case, instead of $x=au$ and $y=bv$ we have $x=\vec{t}^{m}a u\in \hneu$ with $u=\vec{t}^{m_2}a_2\dots \vec{t}^{m_n}a_n$ and $y=\vec{t}^{k}bv\in \hneu$ with $v=\vec{t}^{k_2}a_2\dots \vec{t}^{k_r}b_r$,
with $a,b,a_\ell,b_j\in\mathcal{A}$, $2\le \ell\le n$ and $2\le j\le r$. We can proceed very similar so the arguments are omitted.
\end{proof}

\section{Outlook and summary}
\subsection{Open problems: properties of multi-interpolation}
At the moment, it seems difficult to translate more properties for $\ztt(\vec{i})$ to $\ztm(\vec{i})$. 
Yamamoto established, amongst many other things, the sum property for interpolated multiple zeta values:
\begin{equation}
\label{YSum}
\sum_{\substack{k_1\ge 2, k_i\ge 1\\ \sum_{\ell=1}^{n}k_\ell =k}}\ztt(k_1,\dots,k_n)
=\zt(k)\cdot \sum_{j=0}^{n-1}\binom{k-1}j t^j(1-t)^{n-1-j},
\end{equation}
with $k>n$. For $\ztm(\vec{i})$ we expect that only the simplest case $n=2$ can be easily treated in full generality:
\[
\sum_{k_1=2}^{k-1}\ztm(k_1,k-k_1)=\zt(k) + (k-2)\zt(k\vt).
\]
It is certainly desirable to obtain purely algebraic proofs of Theorem~\ref{TheGF} and Corollary~\ref{Co1}. Moreover, as the new definitions combine both interpolated MZVs
as well as multiple $\mathfrak{t}$-values, it is of interest to look deeper into a common framework for these objects.

\smallskip

\subsection{Outlook: multi-interpolation based on ordered partitions}
One can also look at different kinds interpolations. Extraction of the coefficient of $t^i$ in Yamamoto's sum formula implies Yamamoto's refined identity (see also~\cite{Li,LiQin1}):
\begin{equation}
\label{YamIdentity}
\sum_{\substack{k_1\ge 2, k_i\ge 1\\ \sum_{\ell=1}^{n}k_\ell =k}}
\sum_{\substack{\mathbf{p}_n\in\mathcal{P}(n)\\ n-\mathcal{L}(\mathbf{p}_n)=i}}
\zt\big(\mathbf{p}(k_1,\dots,k_n)\big)
=\zt(k)\cdot \binom{k-n+i-1}{k-n-1}.
\end{equation}
For example, 
\[
\sum_{\substack{k_1\ge 2, k_i\ge 1\\ \sum_{\ell=1}^{n}k_\ell =k}}
\Big(\zt\big(k_1+k_2,\dots,k_n)+\dots+\zt\big(k_1,\dots,k_{n-1}+k_n)\big)=(k-n)\zt(k).
\]

This motivates a different definition of multi-interpolated MZVs, based on ordered partitions:
\begin{equation}
\label{InterpolationU}
\zt^{(\vec{u})}(k_1,\dots,k_n)=
\sum_{\mathbf{p}_n\in\mathcal{P}(n)}
\bigg(\prod_{m=1}^{\infty}u_{m-1}^{N_m(\mathbf{p}_n)}\bigg)
\zt(\mathbf{p}_n(\vec{k})),
\end{equation}
where $N_m(.)$ counts the number of parts of size $m$ in an ordered partition. Here we set $u_0=1$. Since 
\[
\sum_{m=1}^{\infty}(m-1)\cdot N_m(\mathbf{p}_n)=n-\mathcal{L}(\mathbf{p}_n)
\]
we reobtain for $u_m=t^{m}$ the ordinary interpolated MZVs. For example, 
\[
\zt^{(\vec{u})}(k_1,k_2,k_3)=
u_0^3\zt(k_1,k_2,k_3)+u_0u_1\big(\zt(k_1+k_2,k_3)+\zt(k_1,k_2+k_3)\big)
+u_2\zt(k_1+k_2+k_3).
\]
Note that this new definition~\eqref{InterpolationU} of multi-interpolated MZVs can be obtained from
our previous Definitions~\ref{DefMuli} and~\ref{DefNewZeta1} by application of a formal map 
$V$, whose action is given as follows\footnote{Actually, $V$ can be rigorously defined algebraically.}:
\[
V\big(\zt(i_1\vt^{j_1},\dots,i_k\vt^{j_k})\big)=\zt(i_1,\dots,i_k)\cdot\prod_{\ell=1}^{k}u_{j_\ell}.
\]
This implies that
\[
\zt^{(\vec{u})}(i_1,\dots,i_k)=V\big(\ztm(i_1,\dots,i_k)\big).
\]
However, in general 
\[
V\big(\ztm(i_1,\dots,i_k)\cdot \ztm(j_1,\dots,j_m)\big)\neq V\big(\ztm(i_1,\dots,i_k) \big) \cdot V\big(\ztm(j_1,\dots,j_m)\big).
\]
It remains to be seen if there exists a suitably defined harmonic product $\ast^{(\vec{u})}$, or even a more general product. 



\subsection{Further variations}
Another generalization of $\ztt(k_1,\dots,k_n)$ are interpolated Schur multiple zeta values, as introduced in~\cite{B} (see also~\cite{NPY} for Schur multiple zeta values).
We note that it is possible to introduce multi-interpolated Schur multiple zeta values, unifying interpolated Schur multiple zeta values and multi-interpolated multiple zeta values:
the parameters $v(\mathbf{m})$, counting the vertical equalities, and $h(\mathbf{m})$, counting the horizontal equalities can be refined by taking into account the values of the equal entries,
leading to $\vec{t}^{\vec{v}(\mathbf{m})}(\vec{1}-\vec{t})^{\vec{h}(\mathbf{m})}$ (see~\cite{B}, Definition 2.3).

\smallskip 

Finally, following Ohno and Wayama's~\cite{Ohno} generalization of the Arakawa-Kaneko multiple zeta function~\cite{Arakawa},
we note that a multi-interpolated Arakawa-Kaneko multiple zeta function can be defined as follows:
\[
\xi^{\vec{t}}(k_1,\dots,k_r;s)=\frac{1}{\Gamma(s)}\int_0^{\infty} z^{s-1}\frac{\text{Li}^{\vec{t}}_{k_1,\dots,k_r}(1-e^{-z})}{e^z-1}dz,
\]
were $\text{Li}^{\vec{t}}_{k_1,\dots,k_r}(z)$ denotes a multi-interpolated multi-polylogarithm defined by
\[
\text{Li}^{\vec{t}}_{k_1,\dots,k_r}(z)
=\sum_{\ell_1\ge \cdots\ge \ell_r\ge 1}\frac{z^{\ell_1}\cdot \vec{t}^{\vec{\sigma}(\vec{\ell})}}{\ell_1^{k_1}\dots \ell_r^{k_r}}.
\]
By setting $\vec{t}=t$ we reobtain the $t$-Arakawa–Kaneko multiple zeta functions of~\cite{Ohno}.

\smallskip

It remains to be seen if multi-interpolated objects, like multi-interpolated Schur multiple zeta values 
or multi-interpolated Arakawa-Kaneko multiple zeta functions, still have interesting structural properties.

\subsection{Summary}
In this note we introduced a multi-interpolated multiple zeta value $\ztm(\vec{i})$ with variables $\vec{t}=(t_1,t_2,\dots)$, generalized the ordinary interpolated multiple zeta value $\ztt(\vec{i})$, case $t_\ell=t$, $1\le \ell$. 
A few properties of $\ztm(\vec{i})$ where established in this note, in particular, formulas for $\zt^{\vec{t}}(\{s\}_k)$, as well as a harmonic product $\mstuff$ such that 
$\ztm((i_1,\dots,i_m) \mstuff (j_1,\dots,j_k))=\ztm(i_1,\dots,i_m)\cdot \ztm(j_1,\dots,j_k)$. We proposed several open problems and pointed out different variants of multi-interpolated MZVs. 

\smallskip

\end{document}